\newcommand*{\mailto}[1]{\href{mailto:#1}{\nolinkurl{#1}}}
\newcommand{\arxiv}[1]{\href{http://arxiv.org/abs/#1}{arXiv: #1}}
\def\theequation{\@arabic\c@equation}
\newcommand{\bbN}{{\mathbb{N}}}
\newcommand{\bbR}{{\mathbb{R}}}
\newcommand{\cB}{{\mathcal B}}
\newcommand{\cH}{{\mathcal H}}
\newcommand{\no}{\nonumber}
\newcommand{\lb}{\label}
\newcommand{\bi}{\bibitem}
\newcommand{\f}{\frac}
\newcommand{\ol}{\overline}
\newcommand{\wti}{\widetilde}
\newcommand{\dom}{\operatorname{dom}}
\newcommand{\supp}{\operatorname{supp}}
\renewcommand{\ln}{\operatorname{ln}}
\numberwithin{equation}{section}
\newtheorem{theorem}{Theorem}[section]
\newtheorem{corollary}[theorem]{Corollary}
\theoremstyle{definition}
\newtheorem{remark}[theorem]{Remark}
\begin{document}

\title[Hardy--Rellich-Type Inequalities]{Factorizations and Hardy--Rellich-Type Inequalities}

\author[F.\ Gesztesy]{Fritz Gesztesy}
\address{Department of Mathematics,
Baylor University, One Bear Place \#97328,
Waco, TX 76798-7328, USA}
\email{Fritz\_Gesztesy@baylor.edu}
\urladdr{http://www.baylor.edu/math/index.php?id=935340}

\author[L.\ L.\ Littlejohn]{Lance Littlejohn}
\address{Department of Mathematics,
Baylor University, One Bear Place \#97328,
Waco, TX 76798-7328, USA}
\email{Lance\_Littlejohn@baylor.edu}
\urladdr{http://www.baylor.edu/math/index.php?id=53980}


\dedicatory{Dedicated with great pleasure to Helge Holden on the occasion of his 
60th birthday.} 
\date{\today}
\thanks{To appear in {\it Partial Differential Equations, Mathematical Physics, and Stochastic Analysis.  A Volume in Honor of Helge Holden's 60th Birthday}, 
EMS Congress Reports, F.\ Gesztesy, H.\ Hanche-Olsen, E.\ Jakobsen, Y.\ Lyubarskii, N.\ Risebro, and K.\ Seip (eds.).}
\subjclass[2010]{Primary: 35A23, 35J30; Secondary: 47A63, 47F05.}
\keywords{Hardy inequality, Rellich-type inequality, factorizations of differential operators.}

\begin{abstract}
The principal aim of this note is to illustrate how factorizations of singular, even-order partial differential operators yield an elementary approach to classical inequalities of Hardy--Rellich-type. More precisly, introducing the two-parameter $n$-dimensional homogeneous scalar differential expressions $T_{\alpha,\beta} := - \Delta + \alpha |x|^{-2} x \cdot \nabla 
+ \beta |x|^{-2}$, $\alpha, \beta \in \bbR$, $x \in \bbR^n \backslash \{0\}$, $n \in \bbN$, 
$n \geq 2$, and its formal adjoint, denoted by $T_{\alpha,\beta}^+$, we show that nonnegativity of $T_{\alpha,\beta}^+ T_{\alpha,\beta}$ on $C_0^{\infty}(\bbR^n \backslash \{0\})$ implies the fundamental inequality, 
\begin{align}
\int_{\bbR^n} [(\Delta f)(x)]^2 \, d^n x 
& \geq [(n - 4) \alpha - 2 \beta] \int_{\bbR^n} |x|^{-2} |(\nabla f)(x)|^2 \, d^n x  \no \\
& \quad - \alpha (\alpha - 4) \int_{\bbR^n} |x|^{-4} |x \cdot (\nabla f)(x)|^2 \, d^n x   \lb{0.1} \\
& \quad + \beta [(n - 4) (\alpha - 2) - \beta] 
\int_{\bbR^n} |x|^{-4} |f(x)|^2 \, d^n x, \quad f \in C^{\infty}_0(\bbR^n \backslash \{0\}).   \no 
\end{align}
A particular choice of values for $\alpha$ and $\beta$ in \eqref{0.1} yields known 
Hardy--Rellich-type inequalities, including the classical Rellich inequality and an inequality due to Schmincke. By locality, these inequalities extend to the situation where 
$\bbR^n$ is replaced by an arbitrary open set $\Omega \subseteq \bbR^n$ for functions 
$f \in C^{\infty}_0(\Omega \backslash \{0\})$. 

Perhaps more importantly, we will indicate that our method, in addition to being elementary, is quite flexible when it comes to a variety of generalized situations involving the inclusion of remainder terms and higher-order operators.  
\end{abstract}

\maketitle


\section{Introduction} \lb{s1}

{\it We dedicate this note with great pleasure to Helge Holden, whose wide range of 
contributions to a remarkable variety of areas in mathematical physics, stochastics, partial differential equations, and integrable systems, whose exemplary involvement with students, and whose tireless efforts on behalf of the mathematical community, deserve our utmost respect and admiration. Happy Birthday, Helge, we hope our modest contribution to Hardy--Rellich-type inequalities will give some joy.} 

\medskip

The celebrated (multi-dimensional) Hardy inequality,
\begin{equation}
\int_{\bbR^n} |(\nabla f)(x)|^2 \, d^n x \geq [(n-2)/2]^2 \int_{\bbR^n} |x|^{-2} |f(x)|^2 \, d^n x, 
\quad f \in C_0^{\infty}(\bbR^n \backslash \{0\}), \; n \in \bbN, \; n \geq 3,   \lb{1.1}
\end{equation}
and Rellich's inequality,
\begin{equation}
\int_{\bbR^n} |(\Delta f)(x)|^2 \, d^n x \geq [n(n-4)/4]^2 \int_{\bbR^n} |x|^{-4} |f(x)|^2 \, d^n x, 
\quad f \in C_0^{\infty}(\bbR^n \backslash \{0\}), \; n \in \bbN, \; n \geq 5,    \lb{1.2}
\end{equation}
the first two inequalities in an infinite sequence of higher-order Hardy-type inequalities, received enormous attention in the literature due to their ubiquity in self-adjointness and spectral theory problems associated with second and fourth-order differential operators with strongly singular coefficients, respectively (see, e.g., \cite{Al76}, \cite{AGG06}, \cite{BEL15}, \cite{Be89}, 
\cite[Sect.~1.5]{Da89}, \cite[Ch.~5]{Da95}, \cite{Ge84}, \cite{GP80}, \cite{GU98}, \cite{GS89}, \cite{Ka72}--\cite{KW72}, \cite[Ch.~II]{Re69}, \cite{Si83}). We refer to Remark \ref{r2.10} for a selection of Rellich inequality references and some pertinent monographs on Hardy's inequality. 

As one of our principal results we will derive the following two-parameter family of inequalities (a special case of inequality \eqref{1.5} below): 
If either $\alpha \leq 0$ or $\alpha \geq 4$, and $\beta \in \bbR$, then,
\begin{align}
\int_{\bbR^n} |(\Delta f)(x)|^2 \, d^n x 
& \geq [\alpha (n - \alpha) - 2 \beta] 
\int_{\bbR^n} |x|^{-2} |(\nabla f)(x)|^2 \, d^n x  \no \\
& \quad + \beta [(n - 4) (\alpha - 2) - \beta] 
\int_{\bbR^n} |x|^{-4} |f(x)|^2 \, d^n x,    \lb{1.3} \\
& \hspace*{2.3cm} f \in C^{\infty}_0(\bbR^n \backslash \{0\}), \; n \in \bbN, \; n \geq 2.   \no
\end{align} 

As will be shown, \eqref{1.3} contains Rellich's inequality \eqref{1.2}, and Schmincke's one-parameter family of inequalities, 
\begin{align} 
\int_{\bbR^n} |(\Delta f)(x)|^2 \, d^n x 
& \geq - s \int_{\bbR^n} |x|^{-2} |(\nabla f)(x)|^2 \, d^n x  \no \\
& \quad + [(n - 4)/4]^2 \big(4s + n^2\big) \int_{\bbR^n} |x|^{-4} |f(x)|^2 \, d^n x,   \lb{1.4} \\
& \hspace*{2.35cm} s \in \big[- 2^{-1} n(n - 4), \infty\big), \; n \geq 5,   \no 
\end{align} 
as special cases. By locality, the inequalities \eqref{1.1}--\eqref{1.4} naturally 
extend to the case where 
$\bbR^n$ is replaced by an arbitrary open set $\Omega \subset \bbR^n$ for functions 
$f \in C_0^{\infty}(\Omega \backslash \{0\})$ (without changing the constants in these inequalities). 

Our approach is based on factorizing even-order differential equations. More 
precisely, focusing on the 4th-order case for simplicity, we introduce the two-parameter 
$n$-dimensional homogeneous scalar differential expressions 
$T_{\alpha,\beta} := - \Delta + \alpha |x|^{-2} x \cdot \nabla + \beta |x|^{-2}$, 
$\alpha, \beta \in \bbR$, $x \in \bbR^n \backslash \{0\}$, $n \in \bbN$, 
$n \geq 2$, and its formal adjoint, denoted by $T_{\alpha,\beta}^+$. Nonnegativity of 
$T_{\alpha,\beta}^+ T_{\alpha,\beta}$ on $C_0^{\infty}(\bbR^n \backslash \{0\})$ then 
implies the fundamental inequality, 
\begin{align}
\int_{\bbR^n} [(\Delta f)(x)]^2 \, d^n x 
& \geq [(n - 4) \alpha - 2 \beta] \int_{\bbR^n} |x|^{-2} |(\nabla f)(x)|^2 \, d^n x  \no \\
& \quad - \alpha (\alpha - 4) \int_{\bbR^n} |x|^{-4} |x \cdot (\nabla f)(x)|^2 \, d^n x   \lb{1.5} \\
& \quad + \beta [(n - 4) (\alpha - 2) - \beta] 
\int_{\bbR^n} |x|^{-4} |f(x)|^2 \, d^n x, \quad f \in C^{\infty}_0(\bbR^n \backslash \{0\}),   \no 
\end{align}
which in turn contains inequality \eqref{1.3} as a special case. 

We conclude our note with a series of remarks putting our approach into proper context by indicating that our method is elementary and very flexible in handling a variety of generalized situations involving the inclusion of remainder terms and higher even-order differential operators.

\section{Factorizations and Hardy--Rellich-type Inequalities} \label{s2}

The principal inequality to be proven in this section is of the following form:

\begin{theorem} \lb{t2.1}
Let $\alpha, \beta \in \bbR$, and $f \in C^{\infty}_0(\bbR^n \backslash \{0\})$, 
$n \in \bbN$, $n \geq 2$. Then,
\begin{align}
\int_{\bbR^n} [(\Delta f)(x)]^2 \, d^n x 
& \geq [(n - 4) \alpha - 2 \beta] \int_{\bbR^n} |x|^{-2} |(\nabla f)(x)|^2 \, d^n x  \no \\
& \quad - \alpha (\alpha - 4) \int_{\bbR^n} |x|^{-4} |x \cdot (\nabla f)(x)|^2 \, d^n x  
\lb{2.8} \\
& \quad + \beta [(n - 4) (\alpha - 2) - \beta] 
\int_{\bbR^n} |x|^{-4} |f(x)|^2 \, d^n x.   \no 
\end{align}
In addition, if either $\alpha \leq 0$ or $\alpha \geq 4$, then,
\begin{align}
\int_{\bbR^n} |(\Delta f)(x)|^2 \, d^n x 
& \geq [\alpha (n - \alpha) - 2 \beta] 
\int_{\bbR^n} |x|^{-2} |(\nabla f)(x)|^2 \, d^n x  \no \\
& \quad + \beta [(n - 4) (\alpha - 2) - \beta] 
\int_{\bbR^n} |x|^{-4} |f(x)|^2 \, d^n x.   \lb{2.1}
\end{align} 
\end{theorem}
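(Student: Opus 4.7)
\medskip

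\noindent \textbf{Proof proposal.} The strategy is exactly the one advertised in the abstract: translate the nonnegativity of the integral $\int_{\bbR^n} (T_{\alpha,\beta} f)^2 \, d^n x \geq 0$ into an inequality among the four quadratic forms $\int (\Delta f)^2$, $\int |x|^{-2}|\nabla f|^2$, $\int |x|^{-4} |x\cdot\nabla f|^2$, and $\int |x|^{-4} f^2$, via integration by parts. Explicitly, expand
\[
(T_{\alpha,\beta} f)^2 = (\Delta f)^2 - 2\alpha |x|^{-2} (\Delta f)(x\cdot\nabla f) - 2\beta |x|^{-2} (\Delta f) f + \alpha^2 |x|^{-4}(x\cdot\nabla f)^2 + 2\alpha\beta |x|^{-4} f (x\cdot\nabla f) + \beta^2 |x|^{-4} f^2
\]
(taking $f$ real without loss of generality, since the estimate decouples into real and imaginary parts), then integrate each cross term separately.

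The three integration-by-parts identities that will do all the work are the following. Using $\nabla\cdot(|x|^{-4}x) = (n-4)|x|^{-4}$ together with $f(x\cdot\nabla f) = \tfrac12 x\cdot\nabla(f^2)$, one obtains
\[
\int_{\bbR^n} |x|^{-4} f \,(x\cdot\nabla f) \, d^n x = -\tfrac{n-4}{2}\int_{\bbR^n} |x|^{-4} f^2 \, d^n x.
\]
Green's identity and the first identity give
\[
\int_{\bbR^n} |x|^{-2} (\Delta f) f \, d^n x = -\int_{\bbR^n} |x|^{-2} |\nabla f|^2 \, d^n x - (n-4)\int_{\bbR^n} |x|^{-4} f^2 \, d^n x.
\]
The key Pohozaev-type identity is obtained by writing $\int |x|^{-2}(\Delta f)(x\cdot\nabla f)\, d^n x = -\int \nabla f \cdot \nabla[|x|^{-2}(x\cdot\nabla f)]\, d^n x$, using $\partial_i(|x|^{-2} x_j) = |x|^{-2}\delta_{ij} - 2|x|^{-4} x_i x_j$ and $x_j(\nabla f\cdot\partial_j\nabla f) = \tfrac12 x\cdot\nabla(|\nabla f|^2)$, followed by one more integration by parts exploiting $\nabla\cdot(|x|^{-2}x) = (n-2)|x|^{-2}$. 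The result is
\[
\int_{\bbR^n} |x|^{-2} (\Delta f)(x\cdot\nabla f) \, d^n x = 2\int_{\bbR^n} |x|^{-4}(x\cdot\nabla f)^2 \, d^n x + \tfrac{n-4}{2}\int_{\bbR^n} |x|^{-2}|\nabla f|^2 \, d^n x.
\]
Substituting these three identities into $\int (T_{\alpha,\beta} f)^2\, d^n x \geq 0$ and collecting coefficients yields \eqref{2.8} after straightforward algebra (the coefficient of $\int|x|^{-2}|\nabla f|^2$ becomes $(n-4)\alpha - 2\beta$; the coefficient of $\int|x|^{-4}(x\cdot\nabla f)^2$ becomes $-\alpha^2+4\alpha = -\alpha(\alpha-4)$; and the coefficient of $\int|x|^{-4} f^2$ becomes $\alpha\beta(n-4) - 2\beta(n-4) - \beta^2 = \beta[(n-4)(\alpha-2)-\beta]$).

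To pass from \eqref{2.8} to \eqref{2.1}, invoke the pointwise Cauchy--Schwarz bound $|x\cdot\nabla f|^2 \leq |x|^2 |\nabla f|^2$, which gives $|x|^{-4}(x\cdot\nabla f)^2 \leq |x|^{-2}|\nabla f|^2$. Under the hypothesis $\alpha\leq 0$ or $\alpha\geq 4$, we have $-\alpha(\alpha-4) \leq 0$, so
\[
-\alpha(\alpha-4) \int_{\bbR^n} |x|^{-4}(x\cdot\nabla f)^2 \, d^n x \;\geq\; -\alpha(\alpha-4) \int_{\bbR^n} |x|^{-2}|\nabla f|^2 \, d^n x,
\]
and combining with \eqref{2.8} fuses the first two terms into the single coefficient $[(n-4)\alpha-2\beta]-\alpha(\alpha-4) = \alpha(n-\alpha)-2\beta$, yielding \eqref{2.1}.

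The main obstacle is purely computational: the Pohozaev-type identity for $\int|x|^{-2}(\Delta f)(x\cdot\nabla f)\, d^n x$ requires two successive integrations by parts with vector-valued test functions, and one must be careful to track all boundary-free terms. The compact support of $f$ away from the origin guarantees that all integrations by parts produce no boundary contributions, so there are no subtleties regarding the singularity of the weights $|x|^{-2}$, $|x|^{-4}$.
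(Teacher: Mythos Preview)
Your proof is correct and follows the same overarching strategy as the paper: exploit $\int (T_{\alpha,\beta} f)^2 \geq 0$ and integrate by parts to reach \eqref{2.8}, then apply Cauchy--Schwarz under the sign condition $\alpha(\alpha-4)\geq 0$ to obtain \eqref{2.1}. The only organizational difference is that the paper first computes the fourth-order operator $T_{\alpha,\beta}^+ T_{\alpha,\beta}$ explicitly and then pairs it against $f$, using the identities \eqref{2.6}--\eqref{2.7}, whereas you expand the square $(T_{\alpha,\beta} f)^2$ directly and handle the three cross terms with the identities for $\int |x|^{-4} f(x\cdot\nabla f)$, $\int |x|^{-2}(\Delta f)f$, and the Pohozaev-type identity for $\int |x|^{-2}(\Delta f)(x\cdot\nabla f)$; the two routes are equivalent and produce the same coefficients.
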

\begin{proof} 
Given $\alpha, \beta \in \bbR$ and $n \in \bbN$, $n \geq 2$, we introduce the two-parameter $n$-dimensional homogeneous scalar differential expressions  
\begin{equation}
T_{\alpha,\beta} := - \Delta + \alpha |x|^{-2} x \cdot \nabla + \beta |x|^{-2}, 
\quad x \in \bbR^n \backslash \{0\},   \lb{2.2}
\end{equation}
and its formal adjoint, denoted by $T_{\alpha,\beta}^+$,
\begin{equation}
T_{\alpha,\beta}^+ := - \Delta - \alpha |x|^{-2} x \cdot \nabla + [\beta - \alpha (n-2)] |x|^{-2}, 
\quad x \in \bbR^n \backslash \{0\}.     \lb{2.3}
\end{equation}
Assuming $f \in C^{\infty}_0(\bbR^n \backslash \{0\})$ throughout this proof, employing elementary multi-variable differential calculus, we proceed to the computation of $T_{\alpha,\beta}^+ T_{\alpha,\beta}$ (which, while entirely straightforward, may well produce some tears in the process),
\begin{align}
(T_{\alpha,\beta}^+ T_{\alpha,\beta} f)(x) &= (\Delta^2 f)(x) 
+ [(n - 4) \alpha - 2 \beta] |x|^{-2} (\Delta f)(x)    \no \\ 
& \quad + \alpha (4 - \alpha) |x|^{-4} \sum_{j,k = 1}^n x_j x_k f_{x_j, x_k }(x)  \no \\
& \quad + \big[- (n - 3) \alpha^2 + 2 (n - 2) \alpha + 4 \beta\big] |x|^{-4} x \cdot (\nabla f)(x)
\no \\
& \quad + \big[\beta^2 + 2 (n - 4) \beta - (n - 4) \alpha \beta\big] |x|^{-4} f(x).    \lb{2.4} 
\end{align}

Thus, choosing $f \in C^{\infty}_0(\bbR^n \backslash \{0\})$ real-valued from this point on and integrating by parts (observing the support properties of $f$, which results in vanishing surface terms) implies 
\begin{align}
0 & \leq \int_{\bbR^n} [(T_{\alpha,\beta} f)(x)]^2 \, d^n x 
=  \int_{\bbR^n} f(x) (T_{\alpha,\beta}^+ T_{\alpha,\beta} f)(x) \, d^n x   \no \\
& = \int_{\bbR^n} [(\Delta f)(x)]^2 \, d^n x 
+ [(n - 4) \alpha - 2 \beta] \int_{\bbR^n} \int_{\bbR^n} |x|^{-2} f(x) (\Delta f)(x) \, d^n x \no \\
& \quad + \alpha (\alpha - 4) \sum_{j,k = 1}^n \int_{\bbR^n} |x|^{-4} f(x) x_j x_k 
f_{x_j, x_k}(x) \, d^n x    \no \\ 
& \quad + \big[- (n -3) \alpha^2 + 2 (n - 2) \alpha + 4 \beta\big] 
\int_{\bbR^n} |x|^{-4} f(x) [x \cdot (\nabla f)(x)] \, d^n x    \no \\
& \quad + \big[\beta^2 + 2 (n - 4) \beta - (n - 4) \alpha \beta\big] 
\int_{\bbR^n} |x|^{-4} f(x)^2 \, d^n x.  \lb{2.5}
\end{align}

To simplify and exploit expression \eqref{2.5}, we make two observations. First, a standard integration by parts (again observing the support properties of $f$) yields
\begin{align}
\begin{split}
\int_{\bbR^n} |x|^{-2} f(x) (\Delta f)(x) \, d^n x  &= 
2 \int_{\bbR^n} |x|^{-4} f(x) (x \cdot (\nabla f)(x) \, d^n x    \lb{2.6} \\ 
& \quad - \int_{\bbR^n} |x|^{-2} |(\nabla f)(x)|^2 \, d^n x.    
\end{split}
\end{align}  
Similarly, one confirms that
\begin{align}
\begin{split}
\sum_{j,k = 1}^n \int_{\bbR^n} x_j x_k f(x) f_{x_j, x_k }(x) 
&= - (n - 3) \int_{\bbR^n} |x|^{-4} f(x) [x \cdot (\nabla f)(x)] \, d^n x \\
& \quad - \int_{\bbR^n} |x|^{-4} [x \cdot (\nabla f)(x)]^2 \, d^n x.    \lb{2.7} 
\end{split} 
\end{align}
Combining \eqref{2.5}--\eqref{2.7} then yields \eqref{2.8}. 

Since by Cauchy's inequality, 
\begin{equation}
- \int_{\bbR^n} |x|^{-4} [x \cdot (\nabla f)(x)]^2 \, d^n x 
\geq - \int_{\bbR^n} |x|^{-2} |(\nabla f)(x)|^2 \, d^n x,    \lb{2.9} 
\end{equation}
one concludes that as long as $\alpha (\alpha - 4) \geq 0$, that is, as long as either 
$\alpha \leq 0$ or $\alpha \geq 4$, one can further estimate \eqref{2.8} from below and thus arrive at inequality \eqref{2.1}. 
\end{proof}

As a special case of \eqref{2.1} one obtains Rellich's classical inequality in its original form as follows:
 
\begin{corollary} \lb{c2.2}
Let $n \in \bbN$, $n \geq 5$, and $f \in C^{\infty}_0(\bbR^n \backslash \{0\})$. Then,
\begin{align}
\int_{\bbR^n} |(\Delta f)(x)|^2 \, d^n x 
& \geq [n (n - 4)/4]^2 \int_{\bbR^n} |x|^{-4} |f(x)|^2 \, d^n x.   \lb{2.10}
\end{align} 
\end{corollary}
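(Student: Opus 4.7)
The strategy is to specialize inequality \eqref{2.1} of Theorem~\ref{t2.1} to $\alpha = 0$ (which satisfies the hypothesis $\alpha \leq 0$) and then absorb the surviving weighted gradient term into the Rellich-type right-hand side via an auxiliary weighted Hardy inequality. Setting $\alpha = 0$ in \eqref{2.1} reduces it to
\begin{equation*}
\int_{\bbR^n} |(\Delta f)(x)|^2 \, d^n x \geq - 2\beta \int_{\bbR^n} |x|^{-2} |(\nabla f)(x)|^2 \, d^n x - \beta [2(n-4) + \beta] \int_{\bbR^n} |x|^{-4} |f(x)|^2 \, d^n x,
\end{equation*}
valid for every $\beta \in \bbR$; I restrict to $\beta \leq 0$ so that the gradient coefficient $-2\beta$ is nonnegative.

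Next I would invoke the weighted Hardy inequality
\begin{equation*}
\int_{\bbR^n} |x|^{-2} |(\nabla f)(x)|^2 \, d^n x \geq [(n-4)/2]^2 \int_{\bbR^n} |x|^{-4} |f(x)|^2 \, d^n x, \quad n \geq 5,
\end{equation*}
which I would derive in a short side computation by substituting $g(x) = |x|^{-1} f(x)$ into the classical Hardy inequality \eqref{1.1}, expanding $|(\nabla g)(x)|^2 = |x|^{-2} |(\nabla f)(x)|^2 - 2 |x|^{-4} f(x) (x \cdot (\nabla f)(x)) + |x|^{-4} f(x)^2$, and eliminating the cross term via integration by parts using the identity $\nabla \cdot (|x|^{-4} x) = (n-4) |x|^{-4}$. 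The standard Hardy constant $[(n-2)/2]^2$ then collapses to $[(n-4)/2]^2$ via the elementary identity $(n-2)^2/4 - (n-3) = (n-4)^2/4$.

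Combining these two bounds and using the algebraic identity $(n-4)^2/2 + 2(n-4) = n(n-4)/2$ collapses the right-hand side to $-\beta [n(n-4)/2 + \beta] \int_{\bbR^n} |x|^{-4} |f(x)|^2 \, d^n x$. The one-variable quadratic $-\beta(n(n-4)/2 + \beta)$ is maximized at $\beta = -n(n-4)/4$, which is nonpositive for $n \geq 5$ and therefore honors the sign restriction on $\beta$; the corresponding maximum value is exactly $[n(n-4)/4]^2$, yielding \eqref{2.10}. The main obstacle is that Theorem~\ref{t2.1} by itself cannot produce Rellich's sharp constant: dropping the nonnegative gradient term in \eqref{2.1} and optimizing over admissible $\alpha, \beta$ falls short of $[n(n-4)/4]^2$ for $n \geq 5$, so the auxiliary weighted Hardy bound is indispensable.
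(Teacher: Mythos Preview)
Your argument is correct, but it follows a genuinely different route from the paper's. The paper stays entirely inside inequality \eqref{2.1}: it selects $\beta = \alpha(n-\alpha)/2$ so that the gradient coefficient $\alpha(n-\alpha)-2\beta$ vanishes identically, reducing \eqref{2.1} to a single $|x|^{-4}$ term with coefficient
\[
G_n(\alpha) = \tfrac{1}{2}\,\alpha(n-\alpha)\big[(n-4)(\alpha-2)-\tfrac{1}{2}\alpha(n-\alpha)\big],
\]
and then maximizes $G_n$ over $\alpha$ subject to $\alpha\leq 0$ or $\alpha\geq 4$. The maximum is attained at $\alpha_\pm = 2 \pm \big[(n^2/2)-2n+4\big]^{1/2}$ (both admissible once $n\geq 5$), with $G_n(\alpha_\pm)=[n(n-4)/4]^2$. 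You instead freeze $\alpha=0$, keep the gradient term, and bring in the weighted Hardy inequality $\int |x|^{-2}|\nabla f|^2 \geq [(n-4)/2]^2 \int |x|^{-4}|f|^2$ to convert it.

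Both routes reach the sharp constant, but your closing claim is mistaken: Theorem~\ref{t2.1} by itself \emph{does} produce $[n(n-4)/4]^2$, precisely via the parameter choice that annihilates the gradient term rather than estimating it. The auxiliary weighted Hardy bound is therefore not indispensable. What your approach buys is a clean one-variable quadratic optimization in $\beta$, at the cost of importing an external (though elementary) inequality; the paper's approach stays self-contained within the factorization framework but requires optimizing a quartic in $\alpha$.
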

\begin{proof}
Choosing $\beta = \alpha (n - \alpha)/2$ in \eqref{2.1} results in
\begin{equation}
\int_{\bbR^n} |(\Delta f)(x)|^2 \, d^n x 
\geq G_n(\alpha) \int_{\bbR^n} |x|^{-4} |f(x)|^2 \, d^n x,   \lb{2.11}
\end{equation}
with
\begin{equation}
G_n(\alpha) = \alpha (n - \alpha) \{(n - 4) (\alpha -2 ) -  [\alpha (n - \alpha)/2]\}/2. 
\lb{2.12} 
\end{equation}
Maximizing $G_n(\alpha)$ with respect to $\alpha$ (it is advantageous to introduce the new variable $a = \alpha - 2$) yields maxima at 
\begin{equation}
\alpha_{\pm} = 2 \pm \big[\big(n^2/2\big) - 2n +4\big]^{1/2},   \lb{2.13} 
\end{equation}
and taking the constraints $\alpha \leq 0$ or $\alpha \geq 4$ into account results in 
$n \geq 5$. The fact 
\begin{equation}
G_n(\alpha_{\pm}) = [n (n - 4) /4]^2,     \lb{2.14} 
\end{equation}
then yields Rellich's inequality \eqref{2.10}.
\end{proof}

\medskip 

Inequality \eqref{2.8} also implies the following result: 

\begin{corollary} \lb{c2.3}
Let $n \in \bbN$ and $f \in C^{\infty}_0(\bbR^n \backslash \{0\})$. Then,
\begin{align}
\int_{\bbR^n} |(\Delta f)(x)|^2 \, d^n x 
& \geq \big(n^2/4\big) \int_{\bbR^n} |x|^{-2} |(\nabla f)(x)|^2 \, d^n x, \quad n \geq 8,     \lb{2.15}
\end{align} 
and 
\begin{align}
\int_{\bbR^n} |(\Delta f)(x)|^2 \, d^n x 
& \geq 4(n-4) \int_{\bbR^n} |x|^{-2} |(\nabla f)(x)|^2 \, d^n x, \quad 5 \leq n \leq 7.    \lb{2.15A}
\end{align} 
In addition,
\begin{align}
\int_{\bbR^n} |(\Delta f)(x)|^2 \, d^n x 
& \geq \big(n^2/4\big) \int_{\bbR^n} |x|^{-4} |x \cdot (\nabla f)(x)|^2 \, d^n x, \quad 
n \geq 2.    \lb{2.15a}
\end{align} 
\end{corollary}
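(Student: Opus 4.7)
The plan is to deduce each of the three displayed inequalities in Corollary~\ref{c2.3} from the fundamental bound \eqref{2.8} by a suitable choice of the parameters $\alpha,\beta\in\bbR$, combined when necessary with the elementary Cauchy--Schwarz estimate \eqref{2.9} that already appeared at the end of the proof of Theorem~\ref{t2.1}.

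For both \eqref{2.15} and \eqref{2.15a} the natural choice is $\alpha = n/2$ and $\beta = 0$. Setting $\beta=0$ immediately annihilates the $\int_{\bbR^n}|x|^{-4}|f|^2\,d^nx$ term in \eqref{2.8}, and substituting $\alpha = n/2$ leaves
\begin{align*}
\int_{\bbR^n}|(\Delta f)(x)|^2\,d^nx
&\geq \frac{n(n-4)}{2}\int_{\bbR^n}|x|^{-2}|(\nabla f)(x)|^2\,d^nx \\
&\quad +\frac{8n-n^2}{4}\int_{\bbR^n}|x|^{-4}|x\cdot(\nabla f)(x)|^2\,d^nx.
\end{align*}
To obtain \eqref{2.15}, the hypothesis $n\geq 8$ makes the second coefficient non-positive, so \eqref{2.9} permits one to replace the $|x\cdot\nabla f|^2$-integral by the (larger) $|\nabla f|^2$-integral in that term, after which the two coefficients add to $n^2/4$. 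To obtain \eqref{2.15a} for $n\geq 4$, the first coefficient is instead non-negative, and \eqref{2.9} is applied in the opposite direction to absorb the $|\nabla f|^2$-integral into the $|x\cdot\nabla f|^2$-integral; the combined coefficient of the latter is again $n^2/4$.

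For \eqref{2.15A} in the range $5\leq n\leq 7$ the choice $\alpha = n/2$ yields a positive residual $|x\cdot\nabla f|^2$-coefficient that cannot be absorbed by \eqref{2.9} in the useful direction, so I would instead take $\alpha = 4$ and $\beta = 0$. Then the factor $\alpha(\alpha-4)$ in \eqref{2.8} vanishes, along with the $|f|^2$-coefficient, leaving directly $\int_{\bbR^n}|(\Delta f)(x)|^2\,d^nx\geq 4(n-4)\int_{\bbR^n}|x|^{-2}|(\nabla f)(x)|^2\,d^nx$. Equivalently, one may appeal to \eqref{2.1}, since $\alpha = 4$ satisfies the constraint $\alpha\geq 4$.

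The main obstacle I anticipate is the low-dimensional endpoint $n\in\{2,3\}$ of \eqref{2.15a}. There the coefficient $n(n-4)/2$ above is strictly negative and \eqref{2.9} no longer points in the favourable direction, so the preceding reduction breaks down; handling these two dimensions will presumably require either a different parameter pair $(\alpha,\beta)$ tuned specifically to small $n$, or a reduction via spherical-harmonic decomposition of $f$ followed by one-dimensional Hardy-type estimates applied to each angular component.
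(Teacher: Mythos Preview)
Your derivations of \eqref{2.15} and \eqref{2.15A} coincide with the paper's: the paper likewise takes $\alpha=4$, $\beta=0$ in \eqref{2.8} for \eqref{2.15A}, and explicitly records the shortcut of setting $\beta=0$ in \eqref{2.1} and maximizing $\alpha(n-\alpha)$ at $\alpha=n/2$ (subject to $\alpha\geq 4$, hence $n\geq 8$) for \eqref{2.15}.

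For \eqref{2.15a} the paper's route is parametrized differently: instead of $\beta=0$ it sets $\beta=(n-4)(\alpha-2)$ so that the $|f|^2$-term drops out of \eqref{2.8}, leaving
\[
\int_{\bbR^n}|(\Delta f)(x)|^2\,d^nx \;\geq\; (n-4)(4-\alpha)\int_{\bbR^n}|x|^{-2}|(\nabla f)(x)|^2\,d^nx \;-\; \alpha(\alpha-4)\int_{\bbR^n}|x|^{-4}|x\cdot(\nabla f)(x)|^2\,d^nx,
\]
then applies \eqref{2.9} to the first term and maximizes $K_n(\alpha)=-(\alpha+n-4)(\alpha-4)$ over $(4-n,4)$ at $\alpha=(8-n)/2$, obtaining $n^2/4$. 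At this optimum the two coefficients above equal precisely your $n(n-4)/2$ and $n(8-n)/4$, so the paper's intermediate inequality and yours are literally the same display, and the subsequent Cauchy step is identical.

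Consequently the obstacle you flag for $n\in\{2,3\}$ is \emph{not} circumvented by the paper's parametrization. For $n<4$ and any $\alpha\in(4-n,4)$ the coefficient $(n-4)(4-\alpha)$ is strictly negative, so replacing $\int|x|^{-2}|\nabla f|^2$ by the smaller quantity $\int|x|^{-4}|x\cdot\nabla f|^2$ via \eqref{2.9} produces an upper bound on that term, not a lower bound; the passage to \eqref{2.19a} is therefore unjustified in those dimensions. Your instinct that $n=2,3$ require a separate argument is correct, and the paper's proof as written shares exactly the gap you anticipated.
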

\begin{proof}
Again, we chose $f \in C^{\infty}_0(\bbR^n \backslash \{0\})$ real-valued for simplicity throughout this proof. The choice $\beta = (n - 4)(\alpha - 2)$  in \eqref{2.8} then results in
\begin{align}
\begin{split}
\int_{\bbR^n} |(\Delta f)(x)|^2 \, d^n x 
& \geq (n - 4)(4 - \alpha) \int_{\bbR^n} |x|^{-2} |(\nabla f)(x)|^2 \, d^n x  \\
& \quad - \alpha (\alpha - 4) \int_{\bbR^n} |x|^{-4} |x \cdot (\nabla f)(x)|^2 \, d^n x.   
\lb{2.16}
\end{split}
\end{align}
If in addition $\alpha < 0$, then applying Cauchy's inequality to the 2nd term on the 
right-hand side of \eqref{2.16} yields
\begin{equation}
\int_{\bbR^n} |(\Delta f)(x)|^2 \, d^n x 
\geq H_n(\alpha) \int_{\bbR^n} |x|^{-2} |(\nabla f)(x)|^2 \, d^n x,    \lb{2.16a}
\end{equation} 
where $H_n(\alpha) = (n - 4 + \alpha)(4 - \alpha)$. Maximizing $H_n$ with respect to 
$\alpha$ subject to the constraint $\alpha < 0$ yields a maximum at 
$\alpha_1 = (8 - n)/2$, with $H_n((8 - n)/2) = n^2/4$, 
implying inequality \eqref{2.15} for $n \geq 9$. On the other hand, choosing 
$\alpha = 0$ in \eqref{2.16} yields 
\begin{equation} 
\int_{\bbR^n} |(\Delta f)(x)|^2 \, d^n x 
\geq 4 (n - 4) \int_{\bbR^n} |x|^{-2} |(\nabla f)(x)|^2 \, d^n x.   \lb{2.19} 
\end{equation} 
Since $4(n-4) = n^2/4$ for $n=8$, this proves \eqref{2.15}. Actually, one can 
arrive at \eqref{2.15} much quicker, but since we will subsequently use \eqref{2.16}, we kept the above argument in this proof: Indeed, choosing $\beta = 0$ in 
\eqref{2.1} yields
\begin{equation}
\int_{\bbR^n} |(\Delta f)(x)|^2 \, d^n x 
\geq \alpha (n - \alpha) \int_{\bbR^n} |x|^{-2} |(\nabla f)(x)|^2 \, d^n x.    \lb{2.19B}
\end{equation} 
Maximizing $F_n(\alpha) = \alpha (n - \alpha)$ with respect to $\alpha$ yields a 
maximum at $\alpha_1 = n/2$, and subjecting it to the constraint $\alpha \geq 4$ proves \eqref{2.15}. 

Choosing $\alpha = 4$, $\beta = 0$ in \eqref{2.8} yields \eqref{2.15A}. 

For $n \geq 2$ and $(4 - n) < \alpha < 4$, applying Cauchy's inequality to the 1st term on the right-hand side of \eqref{2.16} now yields
\begin{equation}
\int_{\bbR^n} |(\Delta f)(x)|^2 \, d^n x 
\geq K_n(\alpha) \int_{\bbR^n} |x|^{-4} |x \cdot (\nabla f)(x)|^2 \, d^n x,    
\lb{2.19a}
\end{equation}
where $K_n(\alpha) = - (\alpha + n - 4)(\alpha - 4)$. Maximizing $K_n$ subject to the constraint $(4 - n) < \alpha < 4$ yields a maximum at $\alpha_1 = (8 - n)/2$, with 
$K_n((8 - n)/2) = n^2/4$, implying \eqref{2.15a}.
\end{proof}

We conclude with a series of remarks that put our approach into proper context and point out natural continuations into various other directions. 

\begin{remark} \lb{r2.3a}
$(i)$ The constant in inequality \eqref{2.10} is known to be optimal, see, for instance,
\cite[p.~222]{BEL15}, \cite{DH98}, \cite{Mi00}, \cite{Ok96}, \cite{TZ07}, \cite{Ya99}. \\
$(ii)$ A sequence of extensions of \eqref{2.15}, valid for $n \geq 5$, and for bounded 
domains containing $0$, was derived by 
Tertikas and Zographopoulos \cite[Theorem~1.7]{TZ07}. Moreover, an extension 
of inequality \eqref{2.15} valid for $n = 4$ and for bounded open domains containing $0$ was proved by \cite[Theorem~2.1\,(b)]{AGS06}. An alternative inequality whose special cases also imply Rellich's inequality \eqref{2.10} and inequality \eqref{2.15} appeared in \cite{Co09}. Thus, while the constant $n^2/4$ in \eqref{2.15} is known to be optimal (cf.\ \cite{TZ07} for $n \geq 5$), the constant $4(n-4)$ in \eqref{2.15A} is not, the sharp constant being known to be $n^2/4$ (also for $n=4$, cf.\ \cite{AGS06}). 
\hfill $\diamond$ 
\end{remark}

Next, we comment on a special case of inequality \eqref{2.1} originally due to 
Schmincke \cite{Sc72}:

\begin{remark} \lb{r2.4}
The choice $\beta = 2^{-1}(n - 4)[\alpha -2 - 4^{-1}(n - 4)]$, and the introduction of the new variable
\begin{equation}
s = s(\alpha) = \alpha^2 - 4 \alpha - 2^{-1} n (n - 4),    \lb{2.19A} 
\end{equation}
renders the two-parameter inequality \eqref{2.1} into Schmincke's one-parameter inequality
\begin{align} 
\int_{\bbR^n} |(\Delta f)(x)|^2 \, d^n x 
& \geq - s \int_{\bbR^n} |x|^{-2} |(\nabla f)(x)|^2 \, d^n x  \no \\
& \quad + [(n - 4)/4]^2 \big(4s + n^2\big) \int_{\bbR^n} |x|^{-4} |f(x)|^2 \, d^n x,   \lb{2.19b} \\
& \hspace*{2.4cm} s \in \big[- 2^{-1} n(n - 4), \infty\big), \; n \geq 5.    \no 
\end{align} 
Here the requirements $\alpha \leq 0$, equivalently, $\alpha \geq 4$, both yield the range requirement for $s$ in the form $s \in \big[- 2^{-1} n(n - 4), \infty\big)$. Inequality \eqref{2.19b} 
is precisely the content of Lemma~2 in Schmincke \cite{Sc72}, in particular, \eqref{2.1} thus recovers Schmincke's result. Moreover, assuming $n \geq 5$ (the case $n = 4$ being trivial) permits the value $s=0$ and hence implies Rellich's inequality \eqref{2.10}. If $n \geq 8$, the value 
$s= - n^2/4$ is permitted, yielding inequality \eqref{2.15}. Finally, for 
$5 \leq n \leq 7$, $s \in \big[- 2^{-1} n(n - 4), \infty\big)$ and $4s + n^2 \geq 0$  permit one to choose $s = - n(n - 4)/2$ and hence to conclude
\begin{align}
\int_{\bbR^n} |(\Delta f)(x)|^2 \, d^n x & \geq 2^{-1}n(n - 4) 
\int_{\bbR^n} |x|^{-2} |(\nabla f)(x)|^2 \, d^n x   \no \\
& \quad + [(n - 4)/4]^2 \big(4s + n^2\big) \int_{\bbR^n} |x|^{-4} |f(x)|^2 \, d^n x   
\eqref{2.19B} \\
& \geq 2^{-1}n(n - 4) \int_{\bbR^n} |x|^{-2} |(\nabla f)(x)|^2 \, d^n x, \quad 
5 \leq n \leq 7,    \lb{2.19c} 
\end{align}
but inequality \eqref{2.15A} is strictly superior to \eqref{2.19c}. Hence the two-parameter version \eqref{2.1} yields the better result \eqref{2.15A}, even though, the latter is not optimal either as mentioned in the previous Remark \ref{r2.3a}\,$(ii)$. 
\hfill $\diamond$ 
\end{remark}

\begin{remark} \lb{r2.5}
Since all differential expressions employed are local, and only integration by parts was involved in deriving \eqref{2.5} (cf., e.g., \cite[Remark~4]{Ka80} in this context), the estimates \eqref{2.8}, \eqref{2.1}, \eqref{2.10}, \eqref{2.15}--\eqref{2.15a}, \eqref{2.19b}, all extend to the case where $\bbR^n$ is replaced by an arbitrary open set $\Omega \subset \bbR^n$ for functions 
$f \in C_0^{\infty}(\Omega \backslash \{0\})$ (without changing the constants in 
these inequalities).  \hfill $\diamond$
\end{remark}

\begin{remark} \lb{r2.6}
Since $C_0^{\infty}(\bbR^n \backslash \{0\})$ is dense in $H^2(\bbR^n)$ if and only if 
$n \geq 4$, equivalently, $- \Delta\big|_{C_0^{\infty}(\bbR^n \backslash \{0\})}$ is essentially 
self-adjoint in $L^2(\bbR^n)$ if and only if $n\geq 4$ (see, e.g., \cite[p.~412--413]{EE89}, 
see also \cite{KSWW75}, \cite{Sc72}), 
Rellich's inequality \eqref{2.10} extends from $C_0^{\infty}(\bbR^n \backslash \{0\})$ to $H^2(\bbR^n)$ for $n \geq 5$, and inequalities \eqref{2.15}, and \eqref{2.15a} extend from $C_0^{\infty}(\bbR^n \backslash \{0\})$ to $H^2(\bbR^n)$ for $n \geq 4$. 
\hfill $\diamond$
\end{remark}

\begin{remark} \lb{r2.7}
This factorization approach was originally employed in the context of the classical Hardy inequality in \cite{GP80} (and some of its logarithmic refinements in \cite{Ge84}). Without repeating the analogous steps in detail we just mention that given $n \in \bbN$, $n \geq 3$, 
$\alpha \in \bbR$, one introduces the one-parameter family of homogeneous vector-valued differential expressions
\begin{equation}
T_{\alpha} := \nabla + \alpha |x|^{-2} x, \quad  x \in \bbR^n \backslash \{0\},  \lb{2.20}
\end{equation}
with formal adjoint, denoted by $T_{\alpha}^+$, 
\begin{equation}
T_{\alpha}^+ = - {\rm div(\, \cdot \,)} + \alpha |x|^{-2} x \, \cdot, \quad  
\; x \in \bbR^n \backslash \{0\},    \lb{2.21}
\end{equation}
such that (e.g., on $C_0^{\infty}(\bbR^n \backslash \{0\})$-functions), 
\begin{equation}
T_{\alpha}^+ T_{\alpha} = - \Delta + \alpha (\alpha + 2 - n) |x|^{-2}. 
\lb{2.22} 
\end{equation}
Thus, for $f \in C_0^{\infty}(\bbR^n \backslash \{0\})$, 
\begin{align}
\begin{split}
0 & \leq \int_{\bbR^n} |T_{\alpha} f)(x)|^2 \, d^n x 
= \int_{\bbR^n} \ol{f(x)} (T_{\alpha}^+ T_{\alpha} f)(x) \, d^n x     \\
&= \int_{\bbR^n} |(\nabla f)(x)|^2 \, d^n x + \alpha (\alpha + 2 - n)
\int_{\bbR^n} |x|^{-2} |f(x)|^2 \, d^n x,   \lb{2.23} 
\end{split} 
\end{align}
and hence,
\begin{equation}
\int_{\bbR^n} |(\nabla f)(x)|^2 \, d^n x \geq \alpha [(n - 2) - \alpha]
\int_{\bbR^n} |x|^{-2} |f(x)|^2 \, d^n x.    \lb{2.24} 
\end{equation}
Maximizing $\alpha [(n - 2) - \alpha]$ with respect to $\alpha$ yields the classical  Hardy inequality,
\begin{equation}
\int_{\bbR^n} |(\nabla f)(x)|^2 \, d^n x \geq [(n - 2)/2]^2 
\int_{\bbR^n} |x|^{-2} |f(x)|^2 \, d^n x, \quad f \in C_0^{\infty}(\bbR^n \backslash \{0\}), \; n \geq 3.  
\lb{2.25} 
\end{equation}
Again, it is well-known that the constant in \eqref{2.25} is optimal (cf., e.g., \cite{Ya99}). 
\hfill $\diamond$
\end{remark}

Actually, our factorization approach also yields a known improvement of Hardy's inequality (see, e.g., \cite[Theorem~1.2.5]{BEL15}, specializing it to $p=2$, $\varepsilon =0$). Next, we briefly sketch the corresponding argument.

\begin{remark} \lb{r2.8}
Given $n \in \bbN$, $n \geq 3$, $\alpha \in \bbR$, one introduces the following modified one-parameter family of homogeneous vector-valued differential expressions
\begin{equation}
\wti T_{\alpha} := \big(|x|^{-1} x\big) \cdot \nabla + \alpha |x|^{-1}, 
\quad  x \in \bbR^n \backslash \{0\},  \lb{2.25a}
\end{equation}
with formal adjoint, denoted by $\big(\wti T_{\alpha}\big)^+$, 
\begin{equation}
\big(\wti T_{\alpha}\big)^+= - \big(|x|^{-1} x\big) \cdot \nabla  
+ (\alpha - n +1) |x|^{-1}, \quad  \; x \in \bbR^n \backslash \{0\}.    \lb{2.26a}
\end{equation}
Exploiting the identities (for $f \in C_0^{\infty}(\bbR^n \backslash \{0\})$, for simplicity), 
\begin{align}
& \big[|x|^{-1} x \cdot \nabla\big] \big[|x|^{-1} x \cdot (\nabla f)(x)\big] = 
|x|^{-2} \sum_{j,k=1}^n x_j x_k f_{x_j, x_k}(x), \quad  x \in \bbR^n \backslash \{0\},  \lb{2.27a} \\
& x \cdot \nabla \big(|x|^{-1} f(x)\big) = |x|^{-1} [x \cdot (\nabla f)(x)] - |x|^{-1} f(x), 
\quad  x \in \bbR^n \backslash \{0\},  \lb{2.28a} 
\end{align}
one computes (e.g., on $C_0^{\infty}(\bbR^n \backslash \{0\})$-functions), 
\begin{align}
& \big(\wti T_{\alpha}\big)^+ \wti T_{\alpha} 
= - |x|^{-2} \sum_{j,k=1}^n x_j x_k \partial_{x_j} \partial_{x_k} 
- (n-1) |x|^{-2} [x \cdot (\nabla f)(x)] + \alpha (\alpha + 2 - n) |x|^{-2},   \no \\ 
& \hspace*{10.5cm} x \in \bbR^n \backslash \{0\}.    \lb{2.29a} 
\end{align}
Thus, appropriate integration by parts yield 
\begin{align}
0 & \leq \int_{\bbR^n} \big|\big(\wti T_{\alpha} f\big)(x)\big|^2 \, d^n x 
= \int_{\bbR^n} \ol{f(x)} \big(\big(\wti T_{\alpha}\big)^+ \wti T_{\alpha} f\big)(x) \, d^n x   \no \\
&= - \int_{\bbR^n} |x|^{-2} \bigg\{\sum_{j,k=1}^n x_j x_k \ol{f(x)} f_{x_j, x_k}(x) 
+ (n-1) \ol{f(x)} [x \cdot (\nabla f)(x)]    \no \\
& \hspace*{2.4cm} - \alpha (\alpha - n + 2) |f(x)|^2\bigg\} \, d^n x,   \no \\
&= \int_{\bbR^n} |x|^{-2} \bigg\{|[x \cdot (\nabla f)(x)]|^2 
+ \alpha (\alpha - n + 2)|f(x)|^2\bigg\}, \quad f \in C_0^{\infty}(\bbR^n \backslash \{0\}).     \lb{2.30a} 
\end{align}
Here we used 
\begin{align}
\begin{split}
& \sum_{j,k=1}^n \int_{\bbR^n} |x|^{-2} x_j x_k \ol{f(x)} f_{x_j, x_k}(x) \, d^n x 
= - \int_{\bbR^n} |x|^{-2} |[x \cdot (\nabla f)(x)]|^2 \, d^n x   \lb{2.31a} \\
& \quad - (n-1) \int_{\bbR^n} |x|^{-2} \ol{f(x)} [x \cdot (\nabla f)(x)] \, d^n x, 
\quad f \in C_0^{\infty}(\bbR^n \backslash \{0\}).   
\end{split}
\end{align}
Thus,
\begin{equation}
\int_{\bbR^n} \big|\big[|x|^{-1} x \cdot \nabla f](x)\big|^2 \, d^n x \geq \alpha [(n - 2) - \alpha]
\int_{\bbR^n} |x|^{-2} |f(x)|^2 \, d^n x.    \lb{2.32a} 
\end{equation}
Maximizing $\alpha [(n - 2) - \alpha]$ with respect to $\alpha$ yields the improved Hardy inequality,
\begin{equation}
\int_{\bbR^n} \big|\big[|x|^{-1} x \cdot \nabla f](x)\big|^2 \, d^n x \geq [(n - 2)/2]^2 
\int_{\bbR^n} |x|^{-2} |f(x)|^2 \, d^n x, \quad f \in C_0^{\infty}(\bbR^n \backslash \{0\}), \; n \geq 3.     \lb{2.33a} 
\end{equation}
(By Cauchy's inequality, \eqref{2.33a} implies the classical Hardy inequality \eqref{2.25}.)
Again, it is known that the constant in \eqref{2.33a} is optimal (cf., e.g., \cite[Theorem~1.2.5]{BEL15}). 
\hfill $\diamond$
\end{remark}

\begin{remark} \lb{r2.9}
The case of Rellich (and Hardy) inequalities in the half-line case is completely analogous (and much more straightforward): Consider the differential expressions
\begin{align}
T = - \f{d^2}{dx^2} + \f{\alpha}{x} \f{d}{dx} + \f{\beta}{x^2}, \quad 
T^+ = - \f{d^2}{dx^2} - \f{\alpha}{x} \f{d}{dx} + \f{\alpha + \beta}{x^2},    \lb{2.27} 
\end{align}
with $\alpha, \beta \in \bbR$, which are formal adjoints to each other. One verifies,
\begin{equation}
T^+ T = \f{d^4}{dx^4} + \f{\alpha - \alpha^2 - 2 \beta}{x^2} \f{d^2}{dx^2} 
+ \f{2 \alpha^2 - 2 \alpha + 4 \beta}{x^3} \f{d}{dx} + \f{3 \alpha \beta + \beta^2 - 6 \beta}{x^4}, 
\lb{2.28} 
\end{equation}
and hence upon some integrations by parts,
\begin{align} 
0 &\leq \int_0^{\infty} (T f)(x)^2 \, dx = \int_0^{\infty} f(x) (T^+ T f)(x) \, dx    \no \\ 
& = \int_0^{\infty} [f''(x)]^2 \, dx 
- \big(\alpha - \alpha^2 - 2 \beta\big) \int_0^{\infty} \f{[f'(x)]^2}{x^2} \, dx   \no \\
& \quad + \beta (3 \alpha + \beta - 6) \int_0^{\infty} \f{f(x)^2}{x^4} \, dx, \quad 
f \in C_0^{\infty} ((0,\infty)),    \lb{2.29} 
\end{align}
choosing $f$ real-valued (for simplicity and w.l.o.g.). Thus, one obtains,
\begin{align}
\int_0^{\infty} |f''(x)|^2 \, dx 
&\geq \big(\alpha - \alpha^2 - 2 \beta\big) \int_0^{\infty} \f{|f'(x)|^2}{x^2} \, dx  \no \\
& \quad + \beta (6 - \beta - 3 \alpha) \int_0^{\infty} \f{|f(x)|^2}{x^4} \, dx,     \lb{2.30} \\
& \hspace*{1.3cm} f \in C_0^{\infty} ((0,\infty)), \; \alpha, \beta \in \bbR .    \no 
\end{align}
Choosing $\beta = \big(\alpha - \alpha^2\big)/2$ yields the Rellich-type inequality 
\begin{align}
& \int_0^{\infty} |f''(x)|^2 \, dx \geq \big[3 \alpha - (19/4) \alpha^2 + 2 \alpha^3 - (1/4) \alpha^4\big] 
\int_0^{\infty} \f{|f(x)|^2}{x^4} \, dx,   \no \\
& \hspace*{7.9cm} f \in C_0^{\infty} ((0,\infty)).&    \lb{2.31} 
\end{align}

Introducing $F(\alpha) = 3 \alpha - (19/4) \alpha^2 + 2 \alpha^3 - (1/4) \alpha^4    
= - (1/4) (\alpha - 4) (\alpha - 3) (\alpha - 1) \alpha$, $\alpha \in \bbR$, one verifies that 
$F(2 + \gamma) = F(2 - \gamma)$, $\gamma \in \bbR$, and factors its derivative as 
\begin{align}
\begin{split} 
F'(\alpha) &= 3 - (19/2) \alpha + 6 \alpha^2 - \alpha^3    \\
&= - (\alpha - 2) \big(\alpha - 2 + (5/2)^{1/2}\big) \big(\alpha - 2 - (5/2)^{1/2}\big).   \lb{2.32}
\end{split} 
\end{align}
One notes that $\alpha_1 = 2$ yields a local minimum with $F(2) = - 1$, 
$\alpha_2 = 2 - (5/2)^{1/2}$ and $\alpha_3 = 2 + (5/2)^{1/2}$ both yield local maxima of equal value, that is, $F(\alpha_2) = F(\alpha_3) = \f{9}{16}$. 
Thus, one obtains Rellich's inequality for the half-line in the form 
\begin{align}
\int_0^{\infty} |f''(x)|^2 \, dx \geq \f{9}{16} 
\int_0^{\infty} \f{|f(x)|^2}{x^4} \, dx, \quad f \in C_0^{\infty} ((0,\infty)).    \lb{2.33} 
\end{align}
We refer to Birman \cite[p.~46]{Bi66} (see also Glazman \cite[p.~83--84]{Gl65}), who presents a sequence of higher-order Hardy-type inequalities on $(0,\infty)$ whose second member coincides with \eqref{2.33}. For a variant of 
\eqref{2.33} on the interval $(0,1)$ we refer to \cite[p.~114]{Da95}; the case of 
higher-order Hardy-type inequalities for general interval is also considered in \cite{Ow99}. We will reconsider this sequence of higher-order Hardy-type inequalities in \cite{GLMW18}.

In addition, choosing $\beta = 0$ or $\beta = 6 - 3 \alpha$ and subsequently maximizing 
with respect to $\alpha$ yields in either case 
\begin{equation}
\int_0^{\infty} |f''(x)|^2 \, dx \geq \f{1}{4} \int_0^{\infty} \f{|f'(x)|^2}{x^2} \, dx, \quad 
f \in C_0^{\infty} ((0,\infty)),     \lb{2.34} 
\end{equation}
however, this is just Hardy's inequality \cite{Ha19}, \cite{Ha20}, with $f$ replaced by $f'$. \hfill $\diamond$
\end{remark}

\begin{remark} \lb{r2.10}
While we basically focused on inequalities in $L^2(\bbR^n)$ (see, however, Remark \ref{r2.5}), much of the recent work on Rellich and higher-order Hardy inequalities aims at $L^p(\Omega)$ for open sets $\Omega \subset \bbR^n$ (frequently, $\Omega$ is bounded with $0 \in \Omega$), $p \in [1,\infty)$, appropriate remainder terms (the latter often associated with logarithmic refinements or with boundary terms), higher-order Hardy--Rellich inequalities, and the inclusion of magnetic fields. The enormous number of references on this subject, especially, in the context of Hardy-type inequalities, makes it impossible to achieve any reasonable level of completeness in such a short note as the underlying one. Hence we felt we had to restrict ourselves basically to Rellich and higher-order Hardy inequality references only and thus we refer, for instance, to \cite{AGS06}, \cite{Al76}, \cite{Av16}, \cite{Av16a}, \cite[Ch.~6]{BEL15}, \cite{Ba06}, \cite{Ba07}, \cite{BT06}, \cite{Be09}, \cite{BCG10}, \cite{DH98}, \cite{DHA04}, \cite{DHA04a}, \cite{DHA12}, \cite{EE16}, \cite{Ev09}, \cite{EL05}, \cite{EL07}, \cite{Ga06}, \cite{GGM03}, \cite{GM11}, \cite{Ka84}, \cite{MSS15}, 
\cite{MSS16}, \cite{Mo12}, \cite{Mu14}, \cite{Ow99}, \cite{Pa91}, \cite{RS16a}, \cite{TZ07}, \cite{Xi14}, \cite{XY09}, and the extensive literature cited therein. For the case of Hardy-type inequalities we only refer to the standard monographs such as, \cite{BEL15}, \cite{KMP07}, \cite{KP03}, and \cite{OK90}.

In this context we emphasize once again that the factorization method is entirely independent of the choice of domain $\Omega$. Indeed, factorizations in the context of Hardy's inequality in balls with optimal constants and logarithmic correction terms were already studied in \cite{Ge84}, \cite{GP80}, based on prior work in \cite{Ka72}, \cite{KSWW75}, and \cite{KW72}, although this appears to have gone unnoticed in the recent literature on this subject. For instance, one can introduce iterated logarithms of the form for $\gamma > 0$, $x\in \bbR^n$, $|x| < \gamma$, 
\begin{align}
& (- \ln(|x|/\gamma))_0 =1, \no \\
& (- \ln(|x|/\gamma))_1 = (- \ln(|x|/\gamma)), \no \\
& (- \ln(|x|/\gamma))_{k+1} = \ln((- \ln(|x|/\gamma))_k), \quad k \in \bbN,   \lb{2.39} 
\end{align}
and replace $\nabla$ by $T_{\alpha_m, y}$, where 
\begin{align}
& T_{\alpha_m, y}  = \nabla  + 2^{-1} |x-y|^{-2}\bigg\{(n-2) + \sum_{j=1}^m \prod_{k=1}^j 
[(- \ln(|x-y|/\gamma))_k]^{-1}    \no \\
& \hspace*{4.2cm} - \alpha_m \prod_{k=1}^m [(-\ln(|x-y|/\gamma))_k]^{-1}\bigg\} (x-y),  
\lb{2.40} \\
& \hspace*{5cm} 0 < |x| < r, \; r < \gamma, \; \alpha_m \geq 0, \; m \in\bbN,  \no \\
& T_{\alpha_0, y} = \nabla + 2^{-1} (n-2-\alpha_0) |x-y|^{-2} (x-y), \quad 
0 < |x| < r, \; \alpha_0 \geq 0, \; m=0.   \lb{2.41} 
\end{align}
Then with $T_{\alpha_m, y}^+$ the formal adjoint of $T_{\alpha_m, y}$, one obtains for 
$f \in C_0^{\infty}(B_n(y; r) \backslash \{y\})$
\begin{align}
& (T_{\alpha_m,y}^+ T_{\alpha_m,y} f)(x) = (- \Delta f)(x) - 4^{-1} |x-y|^{-2} \bigg\{(n-2)^2  \lb{2.42} \\
& \quad + \sum_{j=1}^m \prod_{k=1}^j [(- \ln(|x-y|/\gamma))_k]^{-2} f(x) 
- \alpha_m^2 \prod_{k=1}^m [(- \ln(|x-y|/\gamma))_k]^{-2} \bigg\}f(x), \quad m \in \bbN,  \no \\
& (T_{\alpha_0,y}^+ T_{\alpha_0,y} f)(x) = (- \Delta f)(x) - 4^{-1} \big[(n-2)^2 - \alpha_0^2\big] |x-y|^{-2} f(x), \quad m=0.   \lb{2.43} 
\end{align} 
(Here $B_n(x_0;r_0)$ denotes the open ball in $\bbR^n$ with center $x_0 \in \bbR^n$ and radius $r_0>0$.) In particular, letting $r_0 \downarrow 0$ and $r_1 \uparrow r$ in 
\cite[Lemma~1]{Ge84} implies 
\begin{align}
& 0 \leq \int_{B(y;r)} |(T_{\alpha_m, y} f)(x)|^2 = 
\int_{B(y;r)} \bigg\{|(\nabla f)(x)|^2 - 4^{-1} |x-y|^{-2} \bigg[(n-2)^2   \no \\
& \quad
+ \sum_{j=1}^m \prod_{k=1}^j [(- \ln(|x-y|/\gamma))_k]^{-2} 
- \alpha_m^2 \prod_{k=1}^m [(- \ln(|x-y|/\gamma))_k]^{-2} \bigg] |f(x)|^2 \bigg\}  \, d^n x ,   
\lb{2.44} \\
& \hspace*{4.9cm} 0 < r < \gamma, \; f \in C_0^{\infty}(B(y;r) \backslash \{y\}), \; 
m \in \bbN \cup \{0\},    \no 
\end{align} 
and hence (with $\alpha_m = 0$), 
\begin{align}
& \int_{B(y;r)} |(\nabla f)(x)|^2 \, d^n x \geq 4^{-1} \int_{B(y;r)} |x-y|^{-2} \bigg\{(n-2)^2   \no \\
& \hspace*{5.5cm} 
+ \sum_{j=1}^m \prod_{k=1}^j [(- \ln(|x-y|/\gamma))_k]^{-2}\bigg\} |f(x)|^2 \, d^n x,   \lb{2.45} \\
& \hspace*{5.1cm} 0 < r < \gamma, \; f \in C_0^{\infty}(B(y;r) \backslash \{y\}), \; 
m \in \bbN \cup \{0\}.   \no 
\end{align}
(Following standard practice, a product, resp., sum over an empty index set is defined to equal $1$, resp., $0$.)
In analogy to Remark \ref{r2.5}, inequality \eqref{2.45} extends to arbitrary open bounded sets 
$\Omega \subset \bbR^n$ as long as $\gamma$ is chosen sufficiently large (e.g., larger than 
the diameter of $\Omega$).  The constants in \eqref{2.44} are best possible as it is well-known that the operators \eqref{2.42}, \eqref{2.43} are nonnegative if and only 
if $\alpha_m^2 \geq 0$, $m \in \bbN \cup \{0\}$ (cf., e.g., \cite[p.~99]{Ge84} or \cite[Theorem~2.2]{GU98}). (They are unbounded from below for $\alpha_m^2 < 0$, $m \in \bbN \cup \{0\}$, permitting temporarily a continuation to negative values of $\alpha^2$ on the right-hand sides of \eqref{2.42} and \eqref{2.43}.)

For the special half-line case we also refer to \cite{GU98}.

Higher-order logarithmic refinements of the multi-dimensional 
Hardy--Rellich-type inequality appeared in \cite[Theorem~2.1]{AGS06}, and a 
sequence of such multi-dimensional Hardy--Rellich-type inequalities, 
with additional generalizations, appeared in \cite[Theorems~1.8--1.10]{TZ07}. 
\hfill $\diamond$
\end{remark}

We conclude this section by mentioning that factorization also works for other singular interactions, for instance, for point dipole interactions, where $|x|^{-2}$ is replaced by 
$|x|^{-3}(d \cdot x)$, with $d \in \bbR^n$ a constant vector. Moreover, it applies to higher-order Hardy-type inequalities where $- \Delta$ is replaced by $(- \Delta)^\ell$, $\ell \in \bbN$. We defer all this to future investigations \cite{GLM18}.

\section{An Application of Rellich's Inequality} \label{s3}

In our final section we sketch an application to lower semiboundedness and to form boundedness for interactions with countably many strong singularities. 

To keep matters short we will just aim at the particular case $(-\Delta)^2 + W$, 
where $W$ has countably many strong singularities. We start by recalling an abstract version of a result of Morgan \cite{Mo79} as described in 
\cite{GMNT16}: 

\begin{theorem} \lb{t3.1}
 Suppose that $T$, $W$ are self-adjoint operators in $\cH$ such that  
$\dom\big(|T|^{1/2}\big)\subseteq\dom\big(|W|^{1/2}\big)$,
and let $c, d \in (0,\infty)$, $e \in [0,\infty)$.
Moreover, suppose $\Phi_j \in \cB(\cH)$, $j \in J$, $J \in \bbN$ an index set,  leave $\dom\big(|T|^{1/2}\big)$ invariant, that is, $\Phi_j \dom\big(|T|^{1/2}\big) \subseteq \dom\big(|T|^{1/2}\big)$, $j \in J$, 
and satisfy the following conditions $(i)$--$(iii)$: \\[1mm]  
$(i)$ $\sum_{j  \in J} \Phi_j^* \Phi_j \leq I_{\cH}$. \\[1mm] 
$(ii)$ $\sum_{j  \in J}  \Phi_j^* |W| \Phi_j \ge c^{-1} |W|$ on $\dom\big(|T|^{1/2}\big)$.\\[1mm] 
$(iii)$ $\sum_{j  \in J}  \| |T|^{1/2} \Phi_j f\|_{\cH}^2 \le 
d \| |T|^{1/2} f\|_{\cH}^2 
+ e\|f\|_{\cH}^2$, \, $f \in \dom\big(|T|^{1/2}\big)$. \\[1mm] 
Then, 
\begin{equation}
\big\| |W|^{1/2} \Phi_j f \big\|_{\cH}^2 \le a \big\||T|^{1/2} \Phi_j f \big\|_{\cH}^2 
+ b \|\Phi_j f\|_{\cH}^2, \quad f \in\dom(|T|^{1/2}), \; j\in J, 
\end{equation} 
implies 
\begin{equation} 
\big\| |W|^{1/2} f \big\|_{\cH}^2 \le a\,c\,d \big\||T|^{1/2} f \big\|_{\cH}^2 
+ [a\,c\,e + b\, c] \|f\|_{\cH}^2, \quad f \in\dom(|T|^{1/2}). 
\end{equation} 
\end{theorem}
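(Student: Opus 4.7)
The plan is to use an IMS-type (Ismagilov--Morgan--Simon--Sigal) localization argument: combine the three structural assumptions (i)--(iii) with the assumed local form bound to obtain the global form bound. The three conditions (i)--(iii) are designed exactly so that the chain of inequalities closes in one pass, so no delicate analysis should be required beyond a careful reading of what ``$\sum_j \Phi_j^* |W| \Phi_j \ge c^{-1}|W|$ on $\dom(|T|^{1/2})$'' actually means.

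First I would interpret (ii) as a quadratic-form inequality. Since $\dom(|T|^{1/2}) \subseteq \dom(|W|^{1/2})$ and each $\Phi_j$ leaves $\dom(|T|^{1/2})$ invariant, the vector $\Phi_j f$ lies in $\dom(|W|^{1/2})$ whenever $f \in \dom(|T|^{1/2})$, so that $\bigl\||W|^{1/2}\Phi_j f\bigr\|_{\cH}^2 = \langle f, \Phi_j^* |W| \Phi_j f\rangle_{\cH}$ is well defined. Condition (ii) then reads
\[
c^{-1}\bigl\||W|^{1/2} f\bigr\|_{\cH}^2 \;\le\; \sum_{j\in J} \bigl\||W|^{1/2}\Phi_j f\bigr\|_{\cH}^2, \qquad f \in \dom\bigl(|T|^{1/2}\bigr).
\]

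Next I would apply the hypothesized local form bound term by term on the right-hand side, producing
\[
\sum_{j} \bigl\||W|^{1/2}\Phi_j f\bigr\|_{\cH}^2 \;\le\; a \sum_{j}\bigl\||T|^{1/2}\Phi_j f\bigr\|_{\cH}^2 \;+\; b \sum_{j}\|\Phi_j f\|_{\cH}^2 .
\]
The first sum is controlled by (iii), yielding at most $d\bigl\||T|^{1/2} f\bigr\|_{\cH}^2 + e\|f\|_{\cH}^2$; the second sum equals $\bigl\langle f, \bigl(\sum_j \Phi_j^*\Phi_j\bigr) f\bigr\rangle_{\cH}$ and hence is bounded by $\|f\|_{\cH}^2$ via (i). Chaining these estimates and multiplying through by $c$ delivers
\[
\bigl\||W|^{1/2} f\bigr\|_{\cH}^2 \;\le\; a\,c\,d\,\bigl\||T|^{1/2} f\bigr\|_{\cH}^2 \;+\; [\,a\,c\,e + b\,c\,]\,\|f\|_{\cH}^2,
\]
which is precisely the asserted conclusion.

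The only genuine point of care is the bookkeeping for countably infinite $J$: the sums in (i)--(iii) and in the displayed localization identity must be interpreted as limits of finite partial sums. Because all terms are nonnegative, this is a routine monotone-convergence issue, and the invariance $\Phi_j \dom(|T|^{1/2}) \subseteq \dom(|T|^{1/2})$ together with $\dom(|T|^{1/2}) \subseteq \dom(|W|^{1/2})$ ensures that every quadratic form appearing in the argument is evaluated on a vector in its form domain. No other obstacle is anticipated.
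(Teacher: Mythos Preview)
Your argument is correct and is exactly the standard IMS-type localization chain one expects here: interpret (ii) as a form inequality, insert the local bound termwise, then apply (iii) and (i), and rescale by $c$. Note, however, that the paper does not supply its own proof of this theorem; it is stated as a recalled abstract result from \cite{GMNT16} (building on Morgan \cite{Mo79}), so there is nothing in the paper to compare against. Your write-up is essentially the argument one finds in the cited reference.
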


Thus, the key for applications would be to have $c$ and $d$ arbitrarily close to $1$ such that if $a < 1$, also $acd < 1$. 

If $W$ is local and $\Phi_j$ represents the operator of multiplication with 
``bump functions'' $\phi_j$, $j \in J\subseteq \bbN$, such that $\phi_j$, 
$j \in J$ is a family of smooth, real-valued functions defined 
on $\bbR^n$ satisfying that for each $x \in \bbR^n$, there exists an open neighborhood $U_x \subset \bbR^n$ of $x$ such that there exist only finitely many indices $k \in J$ with $\supp \, (\phi_k) \cap U_x \neq \emptyset$ and  $\phi_k|_{U_x} \neq 0$, as well as 
\begin{equation}
\sum_{j\in J} \phi_j(x)^2=1, \quad x \in \bbR^n
\end{equation}    
(the sum over $j \in J$ being finite). Then $\Phi_j$ and $W$ commute and hence 
\begin{equation} 
\sum_{j  \in J} \Phi_j^* \Phi_j = I_{\cH} \, \text{ and } \, 
\sum_{j  \in J}  \Phi_j^* |W| \Phi_j = |W| \, \text{ on } \, 
\dom\big(|T|^{1/2}\big) 
\end{equation} 
yield condition $(i)$  and also $(ii)$ with $c = 1$ of Theorem \ref{t3.1}. 

Next, we will illustrate a typical situation where for all $\varepsilon > 0$, one can actually choose $d = 1 + \varepsilon$. 

Consider $T = (- \Delta)^2$, $\dom(T) = H^4(\bbR^n)$ in $L^2(\bbR^n)$, 
$ n \geq 5$, and suppose that 
\begin{equation}
\dom\big(|T|^{1/2}\big)\subseteq\dom\big(|W|^{1/2}\big) 
\end{equation} 
(representing a relative form boundedness condition). Assume  
\begin{equation} 
\sum_{j \in J} \phi_j(\cdot)^2 = 1, \quad 
\bigg\| \sum_{j \in J}  |\nabla \phi_j(\cdot)|^2 \bigg\|_{L^{\infty}(\bbR^n)} < \infty, \quad 
\bigg\| \sum_{j \in J}  |(\Delta \phi_j)(\cdot)|^2 \bigg\|_{L^{\infty}(\bbR^n)} < \infty.
\end{equation}
Then given $\varepsilon > 0$, the elementary estimate 
\begin{align*}
& \sum_{j \in J} \int_{\bbR^n} |\Delta (\phi_j f)(x)|^2 \, d^n x 
 \leq \int_{\bbR^n}  |(\Delta f)(x)|^2 \, d^n x    \\
 & \qquad + \bigg\| \sum_{j \in J}  |(\Delta \phi_j)(\cdot)|^2 \bigg\|_{L^{\infty}(\bbR^n)} \|f\|^2_{L^2(\bbR^n)}   \\
& \qquad + 4 \bigg\|\sum_{j \in J} |(\Delta \phi_j)(\cdot)| 
 |(\nabla \phi_j)(\cdot)|\bigg\|_{L^{\infty}(\bbR^n)} 
 \int_{\bbR^n} |(\nabla f)(x)| |f(x)| \, d^n x    \\
 & \qquad + 2 \bigg\| \sum_{j \in J}  |(\Delta \phi_j)(\cdot)| |\phi_j(\cdot)| \bigg\|_{L^{\infty}(\bbR^n)}  \int_{\bbR^n} |(\Delta f)(x)| |f(x)| \, d^n x   \\
 & \qquad + 4 \bigg\|\sum_{j \in J} |\phi_j(\cdot)| 
 |(\nabla \phi_j)(\cdot)|\bigg\|_{L^{\infty}(\bbR^n)} 
 \int_{\bbR^n} |(\nabla f)(x)| |(\Delta f)(x)| \, d^n x    \\
 & \quad \leq (1 + \varepsilon) \int_{\bbR^n} |(\Delta f)(x)|^2 \, d^n x 
 + C_{\varepsilon} \, \|f\|^2_{L^2(\bbR^n)}, \quad f \in H^2(\bbR^n),   
\end{align*}
for some constant $C_{\varepsilon} \in (0,\infty)$, shows that \begin{align}
& \sum_{j \in J} \big\||T|^{1/2} (\phi_j f)\big\|^2_{L^2(\bbR^n)} 
= \sum_{j \in J} \int_{\bbR^n} |\Delta (\phi_j f)(x)|^2 \, d^n x    \\
 & \quad \leq (1 + \varepsilon) \int_{\bbR^n} |(\Delta f)(x)|^2 \, d^n x 
 + C_{\varepsilon} \|f\|^2_{L^2(\bbR^n)}.
\end{align}
Thus, for arbitrary $\varepsilon > 0$, also condition $(iii)$ of
Theorem \ref{t3.1} holds with $d=1 + \varepsilon$.

Strongly singular potentials $W$ that are covered by Theorem \ref{t3.1} are, for instance, of the following form: Let $J\subseteq \bbN$ be an index set, and $\{x_j\}_{j\in J}\subset \bbR^n$, 
$n \in \bbN$, $n \geq 3$, be a set of points such that 
\begin{equation}
\inf_{\substack{j, j' \in J \\ j \neq j'}} |x_j - x_{j'}| > 0. 
\end{equation}    
Let $\phi$ be a nonnegative smooth function which equals $1$ in $B_n(0;1/2)$ and vanishes outside $B_n(0;1)$. Let $\sum_{j \in J} \phi(x-x_j)^2 \ge 1/2$, $x\in\bbR^n$, and set 
\begin{equation} 
\phi_j(x) = \phi(x-x_j) \big[\sum_{j' \in J} \phi(x-x_{j'})^2\big]^{-1/2}, \quad 
x \in \bbR^n, \; j \in J, 
\end{equation} 
such that $\sum_{j\in J} \phi_j(x)^2 =1$, $x \in \bbR^n$.
In addition, let $\gamma_j \in \bbR$, $j \in J$, $\gamma, \delta \in (0, \infty)$ with 
\begin{equation} 
|\gamma_j| \leq \gamma < \big[n (n - 4)/4\big]^2, \; j \in J,  
\end{equation} 
and consider 
\begin{equation} 
W_0(x) = \sum_{j \in J} \gamma_j \, 
|x - x_j|^{-4} \, e^{- \delta |x - x_j|}, \quad  
x \in \bbR^n \backslash \{x_j\}_{j \in J}. 
\end{equation}
Then combining Rellich's inequality in $\bbR^n$, $n \geq 5$ 
(cf.\ Corollary \ref{c2.2}) and Theorem \ref{t3.1} (with $c=1$ and $d = 1 + \varepsilon$ for arbitrary $\varepsilon > 0$), $W_0$ is form bounded 
with respect to $T= (- \Delta)^2$ with form bound strictly less than one. 

\medskip

\noindent {\bf Acknowledgments.}
We are indebted to Mark Ashbaugh and Roger Lewis for very valuable hints to the literature on Hardy--Rellich-type inequalities and to  Richard Wellman for helpful discussions.  


\end{document}